\newcommand\blfootnote[1]{%
\begingroup
\renewcommand\thefootnote{}\footnote{#1}%
\addtocounter{footnote}{-1}%
\endgroup
}
\newtheorem{theorem}{Theorem}[section]
\newtheorem{lemma}[theorem]{Lemma}
\newtheorem{corollary}[theorem]{Corollary}
\newtheorem{remark}[theorem]{Remark}
\numberwithin{equation}{section}
\title{\bf Parabolic frequency monotonicity for two nonlinear equations under Ricci flow}
\author{Chuanhuan Li
\\%\begin{CJK}{UTF8}{gbsn}
%(李传欢)
%\end{CJK}

\emph{School of Mathematical Sciences, Laboratory of Mathematics and Complex Systems, Beijing Normal University, Beijing 100875, People’s Republic of China}\\

\tt{E-mail: chli@mail.bnu.edu.cn}
\and Yi Li
$^{\ast}$\\%\begin{CJK}{UTF8}{gbsn}
%(李逸)
%\end{CJK}
\emph{Shanghai Institute for Mathematics and Interdisciplinary Sciences,
 657 Songhu Road, Yangpu District, Shanghai 200433, China}\\
\tt {E-mail: yilicms@gmail.com} 
\and Kairui Xu
\\%\begin{CJK}{UTF8}{gbsn}
%(徐锴锐)
%\end{CJK}
\emph{School of Mathematics, Southeast University, Nanjing 211189, China} \\
\tt{E-mail: xukarry@163.com} 
\and Jichun Zhu
\\%\begin{CJK}{UTF8}{gbsn}
%(朱纪纯)
%\end{CJK}
\emph{School of Mathematics, Southeast University, Nanjing 211189, China} \\
\tt{E-mail: 220231969@seu.edu.cn}}
\date{}
\begin{document}
\maketitle

\blfootnote{*Corresponding Author}

\begin{abstract}
In this paper, we study the parabolic frequency for positive solutions of two nonlinear parabolic equations under the Ricci flow on closed manifolds. The first equation is $\partial_{t}u=\Delta_{g(t)}u+au+|\nabla_{g(t)} u|^{2}$ with a constant $a$; the other one is $\partial_{t}u=\Delta_{g(t)} u+\lambda u^{p}$ with two constants $\lambda$ and $p\geq1$. Here $g(t)$ is the Riemannian metric involved by Ricci flow. We establish the monotonicity of the parabolic frequency for the solutions of two nonlinear parabolic equations with bounded Ricci curvature. Subsequently, we apply the parabolic frequency monotonicity to derive some integral type Harnack inequalities. Additionally, we use $-K_{1}$ instead of the lower bound $0$ of Ricci curvature from Theorem 1.3 in \cite{LLX-2023}, where $K_{1}$ is any positive constant. 

${}$

{\bf Keywords} Ricci flow; Parabolic frequency; Nonlinear equation

{\bf 2020 MR Subject Classification} 58J35; 53E20
\end{abstract}

%%%%%%%%%%%%%%%%%%%%%%%%%%%%%%%%%%%
\section{Introduction}
%%%%%%%%%%%%%%%%%%%%%%%%%%%%%%%%%%%

In 1979, the (elliptic) frequency functional for a harmonic function $u(x)$ on $\mathbb{R}^{n}$ was introduced by Almgren in \cite{Dirichlet problem by Almgren}, which was defined by
$$
N(r)=\frac{\displaystyle{r\int_{B(r,p)}|\nabla u(x)|^{2}dx}}{\displaystyle{\int_{\partial B(r,p)}u^{2}(x)\!\ d\sigma}},
$$
where $d\sigma$ is the induced $(n-1)$-dimensional Hausdorff measure on $\partial B(r,p)$, $\nabla$ is the gradient operator on $\mathbb{R}^{n}$, $B(r,p)$ is the ball in $\mathbb{R}^{n}$ and $p$ is a fixed point in $\mathbb{R}^{n}$. Almgren obtained that $N(r)$ is monotone nondecreasing for $r$, and he used this property to investigate the local regularity of harmonic functions and minimal surfaces. Next, Garofalo and Lin \cite{Mon prop by Garofalo, Unique continuation by Garofalo} considered the monotonicity of frequency functional on the Riemannian manifold to study the unique continuation for elliptic operators. The frequency functional was also used to estimate the size of nodal sets in \cite{LA-2018, LA-2018 Yau}. For more applications, see \cite{Harmonic functions by Colding, H-H-L 1998, H-L 1994, frequency by Lin, M-O 2022, Zelditch}.

The parabolic frequency for the solution of the heat equation on $\mathbb{R}^{n}$ was introduced by Poon in \cite{Unique continuation by Poon}, and Ni \cite{N-2015} considered the case when $u(t)$ is a holomorphic function, both of them showed that the parabolic frequency is nondecreasing. Besides, on Riemannian manifolds, the monotonicity of the parabolic frequency was obtained by Colding and Minicozzi \cite{frequency on manifold} through the drift Laplacian operator. Using the matrix Harnack's inequality in \cite{heat equation by Hamilton}, Li and Wang \cite{Parabolic frequency by Li-Wang} investigated the parabolic frequency on compact Riemannian manifolds and the 2-dimensional Ricci flow.
The Ricci flow
\begin{align}
\partial_{t}g(t)=-2\!\ \text{\rm Ric}(g(t))
\label{RF}
\end{align}
was introduced by Hamilton in \cite{RF} to study the compact three-manifolds with positive Ricci curvature, which is a special case of the Poincar\'e conjecture finally proved by Perelman in \cite{Perelman 03, Perelman 02}. Hamilton \cite{RF} obtained the short time existence and uniqueness of the Ricci flow on compact manifolds, and  Shi \cite{Shi-89} obtained a short time solution of the Ricci flow on a complete noncompact manifold whose uniqueness with bounded Riemann curvature was proved by Chen and Zhu in \cite{C-Z 2006}.

In \cite{frequency on RF}, Baldauf and Kim  defined the following parabolic frequency for a solution $u(t)$ of the heat equation 
$$
U(t)=-\frac{\tau(t)\|\nabla_{g(t)}u(t)\|_{L^{2}(d\nu)}^{2}}{\|u(t)\|_{L^{2}(d\nu)}^{2}}\cdot\exp\left\{\displaystyle{-\int_{t_{0}}^{t}\frac{1-\kappa(s)}{\tau(s)}ds}\right\},
$$
where $t\in[t_{0},t_{1}]\subset(0,T)$ with $T<+\infty$, $\tau(t)$ is the backwards time, $\kappa(t)$ is the time-dependent function, $d\nu$ is the weighted measure and $L^{2}(d\nu)$ is the $L^{2}$ space on Riemannian manifold with weighted measure $d\nu$. They proved that parabolic frequency $U(t)$ for the solution of the heat equation is monotone increasing along the Ricci flow with the bounded Bakry-\'{E}mery Ricci curvature and obtained the backward uniqueness. Baldauf, Ho and Lee also derived analogous results to the mean curvature flow in \cite{frequency on MCF}.  

Recently, Liu and Xu studied the monotonicity of parabolic frequency for the weighted $p$-Laplacian heat equation on Riemannian manifolds in \cite{Liu-xu 2022}, and they obtained a theorem of Hardy–\' Polya–Szeg\" o on K\" ahler manifolds under the K\" ahler-Ricci flow.  In \cite{Li-Zhang 2023}, Li and Zhang derived the matrix Li-Yau-Hamilton estimates for positive solutions to the heat equation and the backward conjugate heat equation under the Ricci flow, and they applied these estimates to study the monotonicity of the parabolic frequency. Li, Liu and Ren proved
analogous results under the K\" ahler-Ricci flow in \cite{Li-Liu-Ren 2023}.

In \cite{LLX-2023}, the first author, the second author, and the third author studied the monotonicity of parabolic frequency under the Ricci flow and the Ricci-harmonic flow on closed Riemannian manifolds. They considered two cases: one is the monotonicity of parabolic frequency for the solution of the linear heat equation with bounded Bakry-\' Emery Ricci curvature, and the other case is the monotonicity of parabolic frequency for the solution of the heat equation with bounded Ricci curvature. In \cite{parabolic frequency on G2}, the authors obtained gradient estimates of positive solutions to the heat equation under the $G_{2}$ Laplacian flow for closed $G_{2}$ structures, and they used these estimates to prove the monotonicity of parabolic frequency for positive solutions to the linear heat equation under the $G_{2}$ Laplacian flow with bounded Ricci curvature. Besides, they also got some monotonicity results for the $G_{2}$ Laplacian flow with the bounded Bakry-\'{E}mery Ricci curvature and obtained the backward uniqueness. 

The purpose of this paper is to extend the second case in \cite{LLX-2023} to two nonlinear equations.
At first, we study the following nonlinear equation:
\begin{equation}
\left(\partial_{t}-\Delta_{g(t)}\right)u(t)=a\!\ u(t)+|\nabla_{g(t)}u(t)|_{g(t)}^2,
\label{nonlinear equation}
\end{equation}
where $a$ is any constant independent on $t$, $u(t)$ is a family of smooth functions on $M$, $g(t)$ is the Riemannian metric evolved by the Ricci flow, and $\Delta_{g(t)}$ is the trace Laplacian induced by $g(t)$.

The parabolic frequency $U(t)$ for the solution of nonlinear equation $\eqref{nonlinear equation}$ on Ricci flow $\eqref{RF}$  is denoted by

\begin{equation}
    U(t)=\varphi(t)\frac{D(t)}{I(t)}:=\varphi(t)\frac{\displaystyle{h(t) \int_M\left|\nabla_{g(t)} u(t)\right|_{g(t)}^2 d \mu_{g(t)}}}{\displaystyle{\int_M u(t)^{2} \!\ d\mu_{g(t)}}},
\end{equation}
where
$$\varphi(t)=\exp\left\{-\int_{t_0}^{t}{\left(\frac{h'(s)}{h(s)}+4a+\frac{B(n)}{\eta^{2}e^{as}s}+E(s)\right)ds}\right\},$$
$d\mu_{g(t)}$ is defined in (2.3), $h(t)$ is any time-dependent smooth function and $a$ is any constant, 
$$E(s)=\frac{B_{4}(s)}{\eta e^{as}}+\left(n+nc^{2}A^{2}e^{2as}+\alpha\right)\widetilde{B}(s),\ \ \widetilde{B}(s)=\displaystyle\left(\frac{B_{2}(s)}{\eta^{2}e^{2as}}+B_{3}(s)\right),$$
$c$ is a positive constant depend only on $n$,  $\alpha:= \max\{0,2-\eta e^{at}\}$, $\displaystyle A=\max_{M}u(0)$, $\displaystyle \eta=\min_{M}u(0)$, $B_{2},\ B_{3}$ are  from Lemma \ref{lemma3.4}, and $B(n), B_{4}$ are positive constants from Lemma \ref{lemma3.7}.

By calculating, under the Ricci flow, we obtain the monotonicity of the parabolic frequency for the positive solution of the nonlinear equation $\eqref{nonlinear equation}$.

\begin{theorem}\label{theorem1.1}
    Suppose that $M$ is a closed $n$-dimensional  Riemannian manifold, $(M,g(t))_{t\in [0,T)}$ is the solution of the Ricci flow $\eqref{RF}$ with $-K_{1}g(t)\leq {\rm Ric}(g(t))\leq K_{2}g(t)$ for some $K_{1},K_{2}>0$, and $u(t)$ is a positive solution of the nonlinear equation $\eqref{nonlinear equation}$ with $\eta\leq u(0)\leq A$. Then the following holds:

\begin{itemize} 

\item[(i)] If $h(t)$ is a negative time-dependent function, then the parabolic frequency $U(t)$ is monotone increasing along the Ricci flow.

\item[(ii)] If $h(t)$ is a positive time-dependent function, then the parabolic frequency $U(t)$ is monotone decreasing along the Ricci flow.

\end{itemize}
\end{theorem}

Besides, we also study the following heat-type equation:
\begin{equation}
\left(\partial_{t}-\Delta_{g(t)}\right)u(t)=\lambda \!\ u(t)^{p},
\label{heat-type equation}
\end{equation}
where $p\geq1$, $\lambda\geq 0$ when $p=1$, and for $p>1$, $\lambda$ can be any constant. Both $p$ and $\lambda$ are independent on $t$, $u(t)$ is a family of smooth functions on $M$.

The parabolic frequency $U(t)$ on $[t_{0}, t_{1}]\subset(0,T)$ is denoted by
\begin{equation}
    U(t)=\psi(t)\frac{D(t)}{I(t)}:=\psi(t)\frac{\displaystyle{h(t) \int_M\left|\nabla_{g(t)} u(t)\right|_{g(t)}^2 d \mu_{g(t)}}}{\displaystyle{\int_M u(t)^{2} d\mu_{g(t)}}},
\end{equation}
where
$$\psi(t)=\exp\left\{-\int_{t_0}^{t}{\left(\frac{h^{\prime}(s)+2\lambda_{1} pA^{p-1}h(s)}{h(s)}+\frac{N(s)}{2}n+\frac{ pC(n)}{s}+P\right)ds}\right\},$$
$h(t)$ is any time-dependent smooth function, $\lambda_{1}:=\max\{0,-\lambda\}$,  $\lambda$ is a constant, 
$$
N(t)=C_{2}(t)\left(1+\ln\frac{A}{\eta}\right),\ \ \  \ P=pC_{3}+\lambda_{1}A^{p-1},
$$
$\displaystyle{A=\max_{M\times[t_{0},t_{1}]}u(t)}$,  $\displaystyle{\eta=\min_{M\times[t_{0},t_{1}]}u(t)}$, $C_{2}(t)$ is from Lemma \ref{lemma4.1} and $C(n),\ C_{3}$ are from Lemma \ref{lemma4.2}.

Then using Li-Yau type gradient estimate and Hamilton type gradient estimate on Ricci flow $\eqref{RF}$, we have the following:

\begin{theorem}\label{theorem1.2}
    Suppose that $M$ is a closed $n$-dimensional  Riemannian manifold, $(M,g(t))_{t\in [0,T)}$ is the solution of the Ricci flow $\eqref{RF}$ with $-K_{1}g(t)\leq {\rm Ric}(g(t))\leq K_{2}g(t)$ for some $K_{1},K_{2}>0$, and $u(t)$ is a positive solution of the heat-type equation $\eqref{heat-type equation}$ with $\eta\leq u(t)\leq A$. Then the following holds:

\begin{itemize} 

\item[(i)] If $h(t)$ is a negative time-dependent function, then the parabolic frequency $U(t)$ is monotone increasing along the Ricci flow.

\item[(ii)] If $h(t)$ is a positive time-dependent function, then the parabolic frequency $U(t)$ is monotone decreasing along the Ricci flow.

\end{itemize}
\end{theorem}

\begin{remark}
When $p=1$ in $\eqref{heat-type equation}$, if $\eta\leq u(0)\leq A$, then by the maximum principle, we get $\eta\leq u(t)\leq A$. 
\end{remark}

Using the above parabolic frequency monotonicity, we get the following integral type Harnack inequality.

\begin{corollary}\label{corollary1.3}
    Suppsoe that $M$ is a closed $n$-dimensional  Riemannian manifold, $(M,g(t))_{t\in [0,T)}$ is the solution of the Ricci flow $\eqref{RF}$ with $-K_{1}g(t)\leq {\rm Ric}(g(t))\leq K_{2}g(t)$ for some $K_{1},K_{2}>0$, and $u(t)$ is a positive solution of the heat-type equation $\eqref{heat-type equation}$ with $\eta\leq u(t)\leq A$. Then for any $t\in[t_{0},t_{1}]\subset(0,T)$,
    \begin{equation}
        I(t_1)\geq I(t)\exp\left\{2U(t)\int_{t}^{t_{1}}\frac{-1}{\psi(s)h(s)}ds+2\lambda_{1}\eta^{p-1}(t_{1}-t) \right\}.\label{1.6}
    \end{equation}
\end{corollary}

If we choose $\lambda=0, p=1$ in $\eqref{heat-type equation}$, then we get the 
the parabolic frequency $U(t)$ for the positive solution of the heat equation
\begin{align}
    \label{heat equation}\left(\partial_{t}-\Delta_{g(t)}\right)u(t)=0
\end{align}
on $[t_{0}, t_{1}]\subset(0,T)$ is denoted by
\begin{equation}
    U(t)=\exp\left\{-\int_{t_0}^{t}{\left(\frac{h^{\prime}(s)}{h(s)}+\frac{N(s)}{2}n+\frac{ C(n)}{s}+C_{3}\right)ds}\right\}\frac{D(t)}{I(t)},\label{1.8}
\end{equation}
where $h(t)$ is  any time-dependent smooth function,  $$\displaystyle{N(t)=C_{2}(t)\left(1+\ln\frac{A}{\eta}\right)} ,\ \ \displaystyle A=\max_{M}u(0),\ \  \displaystyle \eta=\min_{M}u(0),$$   $C_{2}(t)$ is from Lemma \ref{lemma4.1} and $C(n),\ C_{3}$ are from Lemma \ref{lemma4.2}. 

From Theorem \ref{theorem1.2}, we get the following corollary:
\begin{corollary}\label{corollary1.4}
    Suppose that $M$ is a closed $n$-dimensional  Riemannian manifold, $(M,g(t))_{t\in [0,T)}$ is the solution of the Ricci flow $\eqref{RF}$ with $-K_{1}g(t)\leq {\rm Ric}(g(t))\leq K_{2}g(t)$ for some $K_{1},K_{2}>0$, and $u(t)$ is a positive solution of the heat equation $\eqref{heat equation}$ with $\eta\leq u(0)\leq A$. Then the following holds:

\begin{itemize}

\item[(i)] If $h(t)$ is a negative time-dependent function, then the parabolic frequency $U(t)$ is monotone increasing along the Ricci flow.

\item[(ii)] If $h(t)$ is a positive time-dependent function, then the parabolic frequency $U(t)$ is monotone decreasing along the Ricci flow.

\end{itemize}
\end{corollary}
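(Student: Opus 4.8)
The plan is to derive Corollary \ref{corollary1.4} as a direct specialization of Theorem \ref{theorem1.2} to the case $\lambda=0$ and $p=1$, in which the heat-type equation \eqref{heat-type equation} reduces precisely to the heat equation \eqref{heat equation}. The only two points requiring care are that Theorem \ref{theorem1.2} is stated under the pointwise-in-time hypothesis $\eta\le u(t)\le A$, whereas the corollary assumes only the initial bound $\eta\le u(0)\le A$, and that the weight $\psi(t)$ appearing in Theorem \ref{theorem1.2} must be checked to collapse exactly to the exponential factor written in \eqref{1.8}.

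First I would record the reduction of the constants. Setting $\lambda=0$ gives $\lambda_{1}=\max\{0,\lambda\}=0$, and then with $p=1$ we get $P=pC_{3}+\lambda_{1}A^{p-1}=C_{3}$. The two $\lambda_{1}$-terms in the integrand of $\psi(t)$ drop out: the quotient $\tfrac{h'(s)+2\lambda_{1}pA^{p-1}h(s)}{h(s)}$ becomes $\tfrac{h'(s)}{h(s)}$, while $\tfrac{pC(n)}{s}$ becomes $\tfrac{C(n)}{s}$. Hence the weight of Theorem \ref{theorem1.2} reduces to
\[
\exp\left\{-\int_{t_0}^{t}\left(\frac{h'(s)}{h(s)}+\frac{N(s)}{2}n+\frac{C(n)}{s}+C_{3}\right)ds\right\},
\]
which is exactly the factor defining $U(t)$ in \eqref{1.8}; the auxiliary quantity $N(s)=C_{2}(s)\bigl(1+\ln\tfrac{A}{\eta}\bigr)$ is unchanged. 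Thus the frequency \eqref{1.8} for the heat equation is literally the frequency of Theorem \ref{theorem1.2} evaluated at $\lambda=0$, $p=1$.

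Second I would justify the passage from the initial bound to the time-dependent bound. For the heat equation \eqref{heat equation} on the closed manifold $M$, the scalar maximum principle applies cleanly because the equation carries no reaction term: at a point where $u(\cdot,t)$ attains its spatial maximum one has $\nabla_{g(t)}u=0$ and $\Delta_{g(t)}u\le 0$, so $\partial_{t}u=\Delta_{g(t)}u\le 0$ there, whence $\max_{M}u(\cdot,t)$ is nonincreasing in $t$; symmetrically $\min_{M}u(\cdot,t)$ is nondecreasing. The time-dependence of $g(t)$ introduces no extra term into this argument. Consequently $\eta\le u(0)\le A$ forces $\eta\le u(t)\le A$ for all $t\in[t_{0},t_{1}]$, so the hypothesis of Theorem \ref{theorem1.2} is met with the same constants $A$ and $\eta$.

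With these two observations in place, the monotonicity of $U(t)$ in both cases (i) and (ii) follows immediately from the corresponding conclusions of Theorem \ref{theorem1.2}. The argument carries essentially no analytic obstacle beyond Theorem \ref{theorem1.2} itself; the only step one must not overlook is the maximum-principle estimate, which is precisely what makes the weaker initial-data hypothesis $\eta\le u(0)\le A$ sufficient in place of the uniform-in-time bound assumed in the theorem.
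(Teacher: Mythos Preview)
Your proposal is correct and follows exactly the paper's approach: the paper obtains this corollary by specializing Theorem~\ref{theorem1.2} to $\lambda=0$, $p=1$, after noting (in the remark immediately following Theorem~\ref{theorem1.2}) that for $p=1$ the maximum principle upgrades the initial bound $\eta\le u(0)\le A$ to the uniform-in-time bound $\eta\le u(t)\le A$. Your reduction of the constants and your maximum-principle justification are in fact more detailed than what the paper itself provides.
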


\begin{remark}\label{remark1.5}
     Since we use Hamilton-Souplet-Zhang-type gradient estimates instead of Hamilton's estimate and Li-Yau's gradient estimates with $-K_{1}g(t)\leq {\rm Ric}(g(t))\leq K_{2}g(t)$ for some $K_{1}, K_{2}>0$, the parabolic frequency $\eqref{heat equation}$ is different from Theorem 4.3 in \cite{LLX-2023}.
\end{remark}

${}$

This paper is organized as follows. In Section 2, we give some notations and definitions. In Section 3, we consider the monotonicity of the parabolic frequency for the positive solution to the nonlinear equation $\eqref{nonlinear equation}$ under the Ricci flow $\eqref{RF}$ and give the proof of Theorem 1.1. In Section 4, we consider the monotonicity of parabolic frequency for the positive solution to the heat-type equation $\eqref{heat-type equation}$ under the Ricci flow $\eqref{RF}$ and give the proof of Theorem 1.2 and Corollary 1.4.

%%%%%%%%%%%%%%%%%%%%%%%%%%%%%%%%%%%%%%%%%%%%%%%%%%%%
\section{notations and definitions}\label{section 2}
%%%%%%%%%%%%%%%%%%%%%%%%%%%%%%%%%%%%%%%%%%%%%%%%%%5

In this section, we introduce some notations and definitions which will be used in the sequel. We use the notations in Hamilton’s paper \cite{RF}, $\nabla_{g}$ is the Levi-Civita connection induced by $g$, $\text{\rm Ric}$, $R$, $dV_{g}$ are  Ricci curvature, scalar curvature, and volume form, respectively. The Laplacian of the smooth time-dependent function $f(t)$ with respect to a family of Riemannian metrics $g(t)$ is
$$
\Delta_{g(t)}f(t)=g^{ij}(t)\left[\partial_{i}\partial_{j}f(t)-\Gamma_{ij}^{k}(t)\partial_{k}f(t)\right],
$$
where $\Gamma_{ij}^{k}(t)$ is the Christoffel symbol of $g(t)$ and $\displaystyle{\partial_{i}=\frac{\partial}{\partial x^{i}}}$.

Under the Ricci flow $\eqref{RF}$, let  $\tau(t)=T-t$ be the backward time. For any time-dependent smooth function $f(t)$ on $M$, we denote 
$$K(t)=(4\pi\tau(t))^{-\frac{n}{2}}e^{-f(t)}$$
to be the positive solution of the conjugate heat equation
\begin{equation}
\partial_{t}K(t)=-\Delta_{g(t)} K(t)+R(g(t)) K(t).\label{2.1}
\end{equation}
From the definition of $K(t)$, we can prove the smooth function $f(t)$ satisfies the following equation
\begin{equation}
\partial_{t}f(t)=-\Delta_{g(t)} f(t)-R(g(t))+|\nabla_{g(t)} f(t)|_{g(t)}^{2}+\frac{n}{2\tau(t)}.\label{2.2}
\end{equation}
Then, under the Ricci flow $\eqref{RF}$, we can define the weighted measure 
\begin{equation}
d\mu_{g(t)}:= K(t) dV_{g(t)}=(4\pi\tau(t))^{-\frac{n}{2}}e^{-f(t)}dV_{g(t)},\ \ \int_{M} d\mu_{g(t)}=1.\label{2.3}
\end{equation}

On the weighted Riemannian manifold $(M^{n},g(t),d\mu_{g(t)})$, the weighted Bochner formula for any smooth function $u$ is as follow 
\begin{align}
\label{Bochner}
\Delta_{g(t),f(t)}\left(|\nabla_{g(t)} u|_{g(t)}^{2}\right)=&2|\nabla_{g(t)}^{2} u|_{g(t)}^{2}+2\left\langle\nabla_{g(t)} u,\nabla_{g(t)}\Delta_{g(t),f(t)}u\right\rangle_{g(t)}\\
&+2\text{\rm Ric}_{f(t)}\left(\nabla_{g(t)} u,\nabla_{g(t)} u\right)\notag,
\end{align}
where
\begin{align}
\text{\rm Ric}_{f(t)}:=\text{\rm Ric}(g(t))+\nabla_{g(t)}^{2} f(t)
\label{Baker}
\end{align}
is the Bakry-\'{E}mery Ricci tensor introduced in \cite{B-E Ricci}, and
\begin{equation}
\Delta_{g(t),f(t)}u:=e^{f(t)}\text{div}_{g(t)}\left(e^{-f(t)}\nabla_{g(t)}{u}\right)=\Delta_{g(t)} u-\left\langle\nabla_{g(t)} f(t),\nabla_{g(t)} u\right\rangle_{g(t)}\label{2.6}
\end{equation}
is the drift Laplacian operator for any smooth function $u$.
And we denote $W^{2,2}_{0}(d\mu_{g(t)})$ as the Sobolev space on Riemannian manifold $(M^{n},g(t))$ with weighted measure $d\mu_{g(t)}$, and the subscript $0$ means 
$\nabla^{\alpha}u=0$ on the boundary for $|\alpha|\leq 1$.

Under the Ricci flow $\eqref{RF}$, the volume form $dV_{g(t)}$ satisfies
$$\partial_{t}(dV_{g(t)})=-R(g(t))\!\ dV_{g(t)}.
$$
%while under the Ricci-harmonic flow $\eqref{RHF}$, the volume form $d\mu_{g}$ satisfies
%$$\partial_{t}(dV_{g(t)})=\left(-R(g(t))+\alpha(t)|\nabla_{g(t)}\phi(t)|^{2}_{g(t)}\right)dV_{g(t)}.$$
Thus, the conjugate heat kernel measure $d\mu_{g(t)}$ is evolved by
\begin{equation}
\partial_{t}(d\mu_{g(t)})=-\left(\Delta_{g(t)} K(t)\right)dV_{g(t)}=-\frac{\Delta_{g(t)} K(t)}{K(t)}d\mu_{g(t)}.
\label{volume}
\end{equation}

%Some properties of the drift Laplacian operator are shown which we will use in the following.[...] \\
%First, the drift Laplacian operator is a self-adjoint operator on the $W^{1,2}(d\mu)$, that is 
%$$\int_{M}(\triangle_{f}u)vd\mu=\int_{M}u(\triangle_{f}v)d\mu$$
%Next, if $u\in W^{2,2}(d\mu)$ for each time, then
%$$\int_{M}|\text{\rm Hess} \ u|^{2}d\mu=\int_{M}(|\triangle_{f}u|^{2}-\text{\rm Ric}_{f}(\nabla{u},\nabla{u}))d\mu$$
%The last one is integration by parts. If $u\in W^{1,2}(d\mu)$ for each time, then
%$$\int_{M}|\nabla u|^{2}d\mu=-\int_{M}\langle u,\triangle_{f}u\rangle d\mu$$

For convenient, we use $\Delta_{f},\ \text{\rm Ric},\ \nabla,\ |\cdot|,\ d\mu$ to replace $\Delta_{g(t),f(t)},\ \text{\rm Ric}(g(t)),$ $\nabla_{g(t)},\ |\cdot|_{g(t)},\ d\mu_{g(t)}$. We always omit
the time variable $t$.

%%%%%%%%%%%%%%%%%%%%%%%%%%%%%%%%%%%%
\section{parabolic frequency of nonlinear equation}
%%%%%%%%%%%%%%%%%%%%%%%%%%%%%%%%%%%%%%

In this section, we consider the parabolic frequency $U(t)$ for the positive solution of the nonlinear equation $\eqref{nonlinear equation}$ under the Ricci flow $\eqref{RF}$.

For a time-dependent function $u=u(t):M\times [t_0,t_1]\rightarrow \mathbb{R}^+$ with $u(t),\partial_{t}u(t)\in W^{2,2}_{0}(d\mu_{g(t)})$ for all $t\in [t_0,t_1] \subset (0,T)$, we define
\begin{align}
I({t}) & =\int_M u(t)^2 d \mu_{g(t)},\\
D(t) & =h(t) \int_M\left|\nabla_{g(t)} u(t)\right|_{g(t)}^2 d \mu_{g(t)}\\
& =-h(t) \int_M\left<u(t), \Delta_{g(t), f(t)} u(t)\right>_{g(t)} d \mu_{g(t)}\notag,\\
    U(t)&=\exp\left\{-\int_{t_0}^{t}{\left(\frac{h'(s)}{h(s)}+4a+\frac{B(n)}{\eta^{2}e^{as}s}+E(s)\right)ds}\right\}\frac{D(t)}{I(t)},
\end{align}
where
$h(t)$ is any time-dependent smooth function and $a$ is any constant, 
$$E(s)=\frac{B_{4}(s)}{\eta e^{as}}+\left(n+nc^{2}A^{2}e^{2as}+\alpha\right)\widetilde{B}(s),\ \ \widetilde{B}(s)=\displaystyle\left(\frac{B_{2}(s)}{\eta^{2}e^{2as}}+B_{3}(s)\right),$$
$c$ is the positive constant depend only on $n$,  $\alpha:= \max\{0,2-\eta e^{at}\}$, $\displaystyle A=\max_{M}u(0)$, $\displaystyle \eta=\min_{M}u(0)$, $B_{2},\ B_{3}$ are constants from Lemma \ref{lemma3.4}, and $B(n), B_{4}$ are positive constants from Lemma \ref{lemma3.7}.

\begin{lemma}\label{lemma3.1}
Suppose $u(t)$ is a family of smooth functions. Under the Ricci flow $\eqref{RF}$, the norm of $|\nabla_{g(t)}u(t)|_{g(t)}^2$ satisfies the following evolution equation:
\begin{align}
    (\partial_t-\Delta_{g(t)})|\nabla_{g(t)}u(t)|_{g(t)}^2&=-2|\nabla_{g(t)}^2u(t)|_{g(t)}^2+2\left<\nabla_{g(t)}u(t),\nabla_{g(t)}(\partial_t-\Delta_{g(t)})u(t)\right>_{g(t)}.
\end{align}
\end{lemma}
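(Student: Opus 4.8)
The plan is to establish the evolution equation for $|\nabla_{g(t)} u(t)|_{g(t)}^2$ by a direct computation, carefully accounting for the fact that the metric itself is evolving under the Ricci flow. The key subtlety, compared with the fixed-metric case, is that the gradient norm $|\nabla u|^2 = g^{ij}\partial_i u\,\partial_j u$ depends on $g(t)$ through the inverse metric $g^{ij}$, so the time derivative picks up an extra term from $\partial_t g^{ij} = 2\,\mathrm{Ric}^{ij}$ (using \eqref{RF}). I would begin by writing $|\nabla u|^2 = \langle \nabla u, \nabla u\rangle$ and differentiating in time, splitting the result into a piece where $\partial_t$ hits the functions $u$ and a piece where it hits the metric.

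\emph{First step.} I would compute $\partial_t |\nabla u|^2 = (\partial_t g^{ij})\partial_i u\,\partial_j u + 2 g^{ij}\partial_i u\,\partial_j(\partial_t u)$. The first term equals $2\,\mathrm{Ric}(\nabla u, \nabla u)$ by the Ricci flow equation, and the second equals $2\langle \nabla u, \nabla(\partial_t u)\rangle$. \emph{Second step.} I would recall the standard (fixed-metric, but applied at each fixed time $t$) Bochner formula $\Delta |\nabla u|^2 = 2|\nabla^2 u|^2 + 2\langle \nabla u, \nabla \Delta u\rangle + 2\,\mathrm{Ric}(\nabla u, \nabla u)$; this is the unweighted analogue of \eqref{Bochner}. \emph{Third step.} I would subtract: forming $(\partial_t - \Delta)|\nabla u|^2$, the two copies of $2\,\mathrm{Ric}(\nabla u, \nabla u)$ cancel exactly, leaving $-2|\nabla^2 u|^2 + 2\langle \nabla u, \nabla(\partial_t u)\rangle - 2\langle \nabla u, \nabla \Delta u\rangle$. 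The last two gradient-inner-product terms combine, since $\nabla$ and $\partial_t$ commute on scalars in the sense that $\nabla(\partial_t u) - \nabla(\Delta u) = \nabla(\partial_t u - \Delta u) = \nabla\big((\partial_t - \Delta)u\big)$, yielding the claimed identity.

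The main point to verify carefully is the cancellation of the Ricci terms: the $+2\,\mathrm{Ric}(\nabla u, \nabla u)$ arising from the evolving inverse metric is precisely negated by the $+2\,\mathrm{Ric}(\nabla u, \nabla u)$ appearing in the Bochner formula once it is moved to the other side, which is the structural reason the Ricci flow produces such a clean evolution equation (this is the well-known fact that $|\nabla u|^2$ evolves without a curvature term under Ricci flow). A secondary point requiring a little care is justifying that $\partial_t$ and $\nabla$ may be interchanged when acting on the scalar $u$ to combine the gradient terms; this is immediate for scalar functions since the commutator only involves Christoffel symbols acting on the scalar, which vanish. I do not expect any genuine obstacle here—the computation is routine once the interplay between $\partial_t g^{ij}$ and the Bochner curvature term is identified—so the only real work is bookkeeping to ensure the signs and factors of $2$ are correct.
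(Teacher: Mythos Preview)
Your computation is correct: under $\partial_t g^{ij}=2\,\mathrm{Ric}^{ij}$ the time derivative of $|\nabla u|^2$ produces $+2\,\mathrm{Ric}(\nabla u,\nabla u)$, the Bochner formula contributes the same term with the same sign on the $\Delta$ side, and upon forming $(\partial_t-\Delta)|\nabla u|^2$ these cancel, leaving exactly $-2|\nabla^2 u|^2+2\langle\nabla u,\nabla(\partial_t-\Delta)u\rangle$. The commutation of $\partial_t$ and $\nabla$ on scalars is indeed trivial, and all factors and signs check out.

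The paper itself does not give a proof here; it simply cites Lemma~3.1 of \cite{LLX-2023}. Your direct Bochner-plus-metric-evolution argument is precisely the standard derivation that underlies the cited result, so you have supplied the proof the paper outsources. There is no substantive difference in method---just that you wrote it out while the authors deferred to their earlier work.
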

 \begin{proof}
     This lemma is from Lemma 3.1 in  \cite{LLX-2023}.
 \end{proof}

If we let $u=\ln v$, then the nonlinear equation $\eqref{nonlinear equation}$ can be viewed as the following nonlinear equation:
\begin{equation}
    \left(\partial_{t}-\Delta_{g(t)}\right)v(t)=a \!\ v(t)\ln v(t).
    \label{nonlinear equation2}
\end{equation}
 Hence we need to give the following estimates:

\begin{lemma}\label{lemma3.2}
    Suppose that $M$ is a closed $n$-dimensional  Riemannian manifold, $(M,g(t))_{t\in [0,T)}$ is the solution of the Ricci flow $\eqref{RF}$ with $-K_{1}g(t)\leq {\rm Ric}(g(t))\leq K_{2}g(t)$ for some $K_{1},K_{2}>0$, and $v(t)$ is a positive solution of the nonlinear equation $\eqref{nonlinear equation2}$ with $\eta_{1}\leq v(0)\leq A_{1}$. Then for all $t\in[t_{0},t_{1}]\subset(0,T)$, we have
    $$
    \frac{|\nabla_{g(t)} v(t)|_{g(t)}}{v(t)}\leq B_{1}\left(\sqrt{\overline{K}}+\frac{1}{R(t)}+\frac{1}{\sqrt{T}}+\sqrt{X}\right)\left(1+\ln\frac{A_{1}^{e^{|a|T}}}{v(t)}\right),
    $$
   where
   $$
   B_{1}=B_{1}(n), \ \overline{K}=\max\{K_{1},K_{2}\},\ 0<R(t)<\rho(t),\ \rho(t)={\rm diam}(M,g(t)), \ $$
    and $X=\max\left\{a\left(1+e^{|a|T}\ln A_{1}\right),0\right\}$ are all positive constants. 
\end{lemma}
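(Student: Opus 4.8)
The plan is to prove this as a Hamilton--Souplet--Zhang-type logarithmic gradient estimate for $v$, using the closedness of $M$ to localize on a geodesic ball whose radius is the diameter $\rho$. First I would record a barrier for $v$ itself. At a spatial maximum of $v(\cdot,t)$ one has $\nabla v=0$ and $\Delta v\le 0$, so along $\eqref{nonlinear equation2}$ the quantity $\max_M v$ obeys $\frac{d}{dt}\max_M v\le a\,(\max_M v)\ln(\max_M v)$; writing $\psi=\ln\max_M v$ this is $\psi'\le a\psi$, and integrating from the initial time with $\max_M v(0)\le A_1$ gives $\max_M v(t)\le A_1^{e^{at}}$. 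Consequently
\[
w:=\ln\frac{v}{A_1^{e^{at}}}=\ln v-e^{at}\ln A_1\le 0,
\]
so that $1-w=1+\ln\frac{A_1^{e^{at}}}{v}\ge 1$, and since $e^{at}\ln A_1$ is spatially constant, $\tfrac{|\nabla v|}{v}=|\nabla\ln v|=|\nabla w|$. A chain-rule computation from $\eqref{nonlinear equation2}$ shows that $w$ satisfies the viscous equation
\[
(\partial_t-\Delta)w=|\nabla w|^2+aw,
\]
so the lemma is equivalent to bounding $|\nabla w|$ by a multiple of $1-w$.

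Next I would set up the Souplet--Zhang quantity $F=\dfrac{|\nabla w|^2}{(1-w)^2}$ and derive a differential inequality for it under the Ricci flow. By Lemma \ref{lemma3.1}, the Ricci contribution coming from the evolving metric cancels the Bochner curvature term, yielding
\[
(\partial_t-\Delta)|\nabla w|^2=2\langle\nabla w,\nabla|\nabla w|^2\rangle+2a|\nabla w|^2-2|\nabla^2 w|^2.
\]
Combining this with the equation for $\beta:=1-w$ and applying the quotient rule to $(\partial_t-\Delta)(|\nabla w|^2/\beta^2)$, then using $|\nabla^2 w|^2\ge\frac1n(\Delta w)^2$ and re-expressing $\Delta w$ through the equation for $w$, one extracts an inequality of the schematic form
\[
(\partial_t-\Delta)F\le \langle \vec{b},\nabla F\rangle-c(n)\,(1-w)\,F^2+C\bigl(\overline K+X\bigr)F,
\]
where the genuinely negative quadratic term $-c(n)(1-w)F^2$ produced by the Hessian is the engine of the estimate, and the effect of the reaction term $aw$ is absorbed into the coefficient measured by $X=\max\{a(1+e^{at}\ln A_1),0\}$.

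Finally I would localize and run the maximum principle. Fixing $x_0\in M$, I take a smooth cutoff $\chi=\chi(d_{g(t)}(x_0,\cdot))$ supported on $B(x_0,2\rho)=M$ with $|\nabla\chi|^2/\chi\le C/\rho^2$ and, by the Laplacian comparison theorem applied at each fixed time with $\mathrm{Ric}\ge-K_1 g$, with $\Delta\chi\ge -C(1+\sqrt{\overline K}\,\rho)/\rho^2$. Applying the maximum principle to $\chi F$, together with a time weight over an interval comparable to $T$ to dispose of the behaviour near the initial time (the source of the $1/\sqrt T$ term), at a space-time maximum where $\nabla(\chi F)=0$ and $(\partial_t-\Delta)(\chi F)\ge 0$, converts the differential inequality into an algebraic quadratic inequality for $F$; solving it gives $F\le B_1^2(\overline K+\rho^{-2}+T^{-1}+X)$. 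Then $\tfrac{|\nabla v|}{v}=|\nabla w|=(1-w)\sqrt F$ yields the stated estimate.

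The main obstacle is the derivation of the differential inequality for $F$ with clean, dimension-only constants while simultaneously (i) handling the extra $|\nabla w|^2$ nonlinearity and the reaction term $aw$, which break the pure heat structure and force careful Young-type absorptions into the good term $-c(n)(1-w)F^2$, and (ii) controlling the time dependence of both the metric and the distance function $d_{g(t)}$ entering $\chi$, so that the Laplacian comparison and cutoff estimates remain uniform on $[t_0,t_1]$. Tracking precisely how $X$, $1/\rho$ and $1/\sqrt T$ emerge from these competing terms is where the real work lies.
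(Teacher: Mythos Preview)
Your plan is sound and would yield the lemma, but it is a different route from the paper's. The paper's proof is two sentences: it first obtains the barrier $\eta_1^{e^{at}}\le v(t)\le A_1^{e^{at}}$ by the ODE comparison/maximum principle (exactly as you do), and then simply invokes Theorem~1.1 of Huang--Ma \cite{Hamilton for heat-type equation} with $b=0$, specialized to a closed manifold by taking the localization radius equal to $\rho=\mathrm{diam}(M)$. In other words, the paper treats the Souplet--Zhang machinery as a black box.

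What you propose is precisely to open that box and re-derive the Huang--Ma estimate: the substitution $w=\ln v-e^{at}\ln A_1$, the evolution $(\partial_t-\Delta)w=|\nabla w|^2+aw$, the auxiliary quantity $F=|\nabla w|^2/(1-w)^2$, the Bochner computation (using Lemma~\ref{lemma3.1} to cancel the curvature term under Ricci flow), and the cutoff/maximum-principle argument producing the $\overline K$, $1/\rho$, $1/\sqrt T$, $X$ terms. This is the standard architecture of such estimates and is correct. The trade-off is clear: the paper's citation is efficient but opaque about where each constant comes from; your approach is self-contained and explains the structure of the bound, at the cost of several pages of routine but delicate computation. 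The one place where you should be careful, as you yourself flag, is tracking exactly how the reaction term $aw$ and the evolving distance function feed into the final algebraic inequality so that the constant $X=\max\{a(1+e^{at}\ln A_1),0\}$ emerges in the stated form; this is bookkeeping rather than a genuine obstacle.
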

\begin{proof}
Applying the maximum principle in \cite{CK 2004} to the nonlinear equation $\eqref{nonlinear equation2}$ with $\eta_{1}\leq v(0)\leq A_{1}$, we get
    $$
    \eta_{1}^{e^{at}}\leq v(t)\leq A_{1}^{e^{at}}{\leq A_{1}^{e^{|a|T}}}.
    $$
   Since Riemannian manifold $M$ is closed, $M$ can be covered by finite balls for each $t\in [t_{0},t_{1}]$, which means for any $p_{i}\in M$,
    $$\displaystyle\bigcup_{i=1}^{n}B_{p_{i}}(R(t))=(M,g(t)).$$
    Let $b=0$ in Theorem 1 of \cite{Hamilton for heat-type equation}, we get the desired result.
\end{proof}

\begin{remark}\label{remark3.3}
    When we assume the solution $u(t)$ of nonlinear equation $\eqref{nonlinear equation}$ is positive, the solution $v(t)$ of nonlinear equation $\eqref{nonlinear equation2}$ should be more than 1. Hence we need  $\eta_{1}>1$. 
\end{remark}

\begin{lemma}\label{lemma3.4}
    Suppose that $M$ is a closed $n$-dimensional Riemannian manifold, $(M,g(t))_{t\in [0,T)}$ is the solution of the Ricci flow $\eqref{RF}$ with $-K_{1}g(t)\leq {\rm Ric}(g(t))\leq K_{2}g(t)$ for some $K_{1},K_{2}>0$, and $u(t)$ is a positive solution of the nonlinear equation $\eqref{nonlinear equation}$ with $\ln\eta_{1}=\eta\leq u(0)\leq A=\ln A_{1}$. Then for all $t\in[t_{0},t_{1}]\subset(0,T)$, we have
    \begin{eqnarray*}
    |\nabla_{g(t)} u(t)|^{2}_{g(t)}&\leq &B_{1}^{2}\left(\sqrt{\overline{K}}+\frac{1}{{R(t)}}+\frac{1}{\sqrt{T}}+\sqrt{X}\right)^{2}\left(1+e^{{|a|T}}A-u\right)^{2}\notag\\
    &\leq& B_{2}(t)+B_{3}(t)u^{2},\notag
    \end{eqnarray*}
    where $B_{1}$, $\overline{K}$, $R(t)$, and $X$ are from Lemma \ref{lemma3.2}.
    $$B_{2}(t)=B_{3}(t)\left(1+e^{|a|T}A\right)^{2},\quad B_{3}=2B_{1}^{2}\left(\sqrt{\overline{K}}+\frac{1}{{R(t)}}+\frac{1}{\sqrt{T}}+\sqrt{X}\right)^{2}.$$
\end{lemma}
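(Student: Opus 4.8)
The plan is to exploit the logarithmic substitution $u=\ln v$ already introduced just before Lemma~\ref{lemma3.2}, which converts the gradient estimate for the nonlinear equation $\eqref{nonlinear equation}$ into a direct application of the Hamilton-type bound of Lemma~\ref{lemma3.2} for the auxiliary equation $\eqref{nonlinear equation2}$, namely $(\partial_{t}-\Delta)v=a\,v\ln v$. First I would note that since $u$ is a positive solution of $\eqref{nonlinear equation}$, setting $v=e^{u}$ produces a positive solution of $\eqref{nonlinear equation2}$ with $v(0)\geq e^{\eta}=\eta_{1}>1$ (this is exactly the restriction recorded in Remark~\ref{remark3.3}) and $v(0)\leq e^{A}=A_{1}$, so the hypotheses of Lemma~\ref{lemma3.2} hold with $\ln\eta_{1}=\eta$ and $\ln A_{1}=A$.

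For the first inequality I would translate the conclusion of Lemma~\ref{lemma3.2} back to $u$. Using $|\nabla u|=|\nabla v|/v$ and $\ln\!\big(A_{1}^{e^{at}}/v\big)=e^{at}\ln A_{1}-\ln v=e^{at}A-u$, Lemma~\ref{lemma3.2} reads
\[
|\nabla u|\leq B_{1}\Big(\sqrt{\overline{K}}+\tfrac{1}{\rho}+\tfrac{1}{\sqrt{T}}+\sqrt{X}\Big)\big(1+e^{at}A-u\big).
\]
The right-hand side is nonnegative because the maximum principle underlying Lemma~\ref{lemma3.2} gives $v\leq A_{1}^{e^{at}}$, i.e.\ $u\leq e^{at}A$, so that $1+e^{at}A-u\geq 1>0$; hence squaring preserves the inequality and yields the first line of the claim, after relabelling the $n$-dependent constant $B_{1}^{2}$ again as $B_{1}$.

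The second inequality is then elementary. Writing $w=1+e^{at}A$, the pointwise bound $2wu\leq w^{2}+u^{2}$ gives $(w-u)^{2}\leq 2w^{2}+2u^{2}$, so with $B_{3}=2B_{1}\big(\sqrt{\overline{K}}+\tfrac{1}{\rho}+\tfrac{1}{\sqrt{T}}+\sqrt{X}\big)^{2}$ and $B_{2}=B_{3}(1+e^{at}A)^{2}$ one gets
\[
B_{1}\Big(\sqrt{\overline{K}}+\tfrac{1}{\rho}+\tfrac{1}{\sqrt{T}}+\sqrt{X}\Big)^{2}\big(1+e^{at}A-u\big)^{2}\leq B_{2}(t)+B_{3}u^{2},
\]
as required. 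I do not expect any genuine analytic obstacle here, since all the hard work is contained in Lemma~\ref{lemma3.2} and the remainder is the bookkeeping of the substitution. The only points that need care are that the positivity hypothesis $\eta_{1}>1$ makes $u=\ln v$ a legitimate positive function, and that the maximum-principle bound $u\leq e^{at}A$ keeps the gradient-estimate factor $1+e^{at}A-u$ positive before squaring; once these are in place the estimate follows mechanically.
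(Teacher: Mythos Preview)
Your argument is correct and is exactly the approach the paper intends: Lemma~\ref{lemma3.4} is stated without proof because it is the direct translation of Lemma~\ref{lemma3.2} under the substitution $u=\ln v$ announced just before that lemma, followed by the elementary inequality $(w-u)^{2}\leq 2w^{2}+2u^{2}$. Your handling of the constant relabelling $B_{1}^{2}\mapsto B_{1}$ and of the positivity of $1+e^{at}A-u$ via the maximum-principle bound $u\leq e^{at}A$ are the only points requiring comment, and you address both correctly.
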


\begin{lemma}\label{lemma3.5}
Suppose that $M$ is a closed $n$-dimensional  Riemannian manifold, $(M,g(t))_{t\in [0,T)}$ is the solution of the Ricci flow $\eqref{RF}$ with $-K_{1}g(t)\leq {\rm Ric}(g(t))\leq K_{2}g(t)$ for some $K_{1},K_{2}>0$, and $v(t)$ is a positive solution of the nonlinear equation $\eqref{nonlinear equation2}$. Then for all $t\in[t_{0},t_{1}]\subset(0,T)$, we have 
\begin{equation}
\frac{\left|\nabla_{g(t)} v(t)\right|^2_{g(t)}}{v(t)^{2}}-2 \frac{\partial_{t}v(t)}{v(t)}-2 a \ln v(t)\leq\frac{B(n)}{t}+B_{4}(n,\overline{K},R(t),a),\label{3.6}
\end{equation}
where $B(n)$ is a positive constant  depends only on $n$,   
$$B_{4}(n,\overline{K}, R(t),a):=\frac{B(n)}{{R(t)}}^{2}+\frac{B(n)\sqrt{\overline{K}}}{{R(t)}}+B(n)\overline{K}+16n\sqrt{a^{2}},$$
 and $\overline{K}=\max\{K_{1},K_{2}\}$, $0<R(t)\leq \rho(t)$, $\rho(t)={\rm diam}(M,g(t))$.
\end{lemma}
\begin{proof}
    Similar to Lemma \ref{lemma3.2}, the closed Riemannian manifold $M$ can be covered by finite balls. If we let $\displaystyle{c=d=\frac{1}{4}}$, $\beta=2$, $\displaystyle{\epsilon=\frac{1}{2}}$, $q=0$, $\gamma=0$ in Theorem 3.2 of \cite{Harnack estimates for nonlinear equation}, then we get the desired result.
\end{proof}

%\begin{remark}\label{remark3.6}
%    In the case $\rho\rightarrow+\infty$, because $C(n)$ and $\overline{K}$ are bounded, then we get $B_{4}=B(n)\overline{K}+16n\sqrt{a^{2}}$.
%\end{remark}

\begin{lemma}\label{lemma3.7}
    Suppose that $M$ is a closed $n$-dimensional  Riemannian manifold, $(M,g(t))_{t\in [0,T)}$ is the solution of the Ricci flow $\eqref{RF}$ with $-K_{1}g(t)\leq {\rm Ric}(g(t))\leq K_{2}g(t)$ for some $K_{1},K_{2}>0$, and $u(t)$ is a positive solution of the nonlinear equation $\eqref{nonlinear equation}$. Then for all $t\in[t_{0},t_{1}]\subset(0,T)$, we have 
    \begin{align}
    |\nabla_{g(t)} u(t)|^{2}_{g(t)}-2\partial_{t}u(t)-2 au(t)\leq \frac{B(n)}{t}+B_{4}(n,\overline{K},R(t),a).
    \end{align}
    where $B(n)$ and $B_{4}(n,\overline{K},R(t),a)$ are from Lemma \ref{lemma3.5}.
\end{lemma}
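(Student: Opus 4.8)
The plan is to obtain this estimate as a direct consequence of Lemma~\ref{lemma3.5} through the logarithmic substitution $u=\ln v$ that was already used to pass from \eqref{nonlinear equation} to \eqref{nonlinear equation2}. Setting $v=e^{u}$, a positive solution $u$ of \eqref{nonlinear equation} corresponds to a positive solution $v$ of \eqref{nonlinear equation2}, so Lemma~\ref{lemma3.5} applies to $v$ and yields
\begin{equation*}
\frac{|\nabla_{g(t)}v(t)|^{2}_{g(t)}}{v(t)^{2}}-2\frac{\partial_{t}v(t)}{v(t)}-2a\ln v(t)\leq\frac{B(n)}{t}+B_{4}(n,\overline{K},\rho,a),
\end{equation*}
with exactly the constants $B(n)$ and $B_{4}$ recorded there, so that no adjustment of constants will be needed at the end.

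The second step is to rewrite each term of this inequality in terms of $u$. Differentiating $v=e^{u}$ gives $\nabla_{g(t)}v=v\,\nabla_{g(t)}u$ and $\partial_{t}v=v\,\partial_{t}u$, whence $|\nabla_{g(t)}v(t)|^{2}_{g(t)}/v(t)^{2}=|\nabla_{g(t)}u(t)|^{2}_{g(t)}$ and $\partial_{t}v/v=\partial_{t}u$; moreover $\ln v=u$. Substituting these three identities into the displayed inequality collapses the left-hand side precisely to $|\nabla_{g(t)}u(t)|^{2}_{g(t)}-2\partial_{t}u(t)-2au(t)$, which is the claimed bound.

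Since every step is an algebraic identity coming from the chain rule for $v=e^{u}$, there is no genuine analytic obstacle here; the only point requiring care is to confirm once more that the substitution $u=\ln v$ indeed turns \eqref{nonlinear equation} into \eqref{nonlinear equation2}, as this is what licenses the application of Lemma~\ref{lemma3.5}. Computing $(\partial_{t}-\Delta_{g(t)})v=v\,(\partial_{t}-\Delta_{g(t)})u-v\,|\nabla_{g(t)}u(t)|^{2}_{g(t)}$ and inserting the equation $(\partial_{t}-\Delta_{g(t)})u=au+|\nabla_{g(t)}u(t)|^{2}_{g(t)}$ cancels the gradient term and leaves $a\,v\ln v$, confirming that $v$ solves \eqref{nonlinear equation2}. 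With this verified, the estimate for $u$ follows at once.
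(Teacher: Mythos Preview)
Your proof is correct and is exactly the argument the paper intends: Lemma~\ref{lemma3.7} is stated in the paper without its own proof because it is the direct translation of Lemma~\ref{lemma3.5} under the substitution $v=e^{u}$ introduced before \eqref{nonlinear equation2}. Your chain-rule computations and the verification that $v$ solves \eqref{nonlinear equation2} match that translation precisely.
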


%\begin{lemma}
%For any $u(t)\in W_0^{2,2}(dV_{g(t)})$, we have 
%    \begin{align}
 %       \int_M{|\nabla_{g(t)}^2 u(t)|_{g(t)}^2}dV_{g(t)}=\int_M{\left(|\Delta_{g(t)}u|_{g(t)}^2-\mathrm{Ric}(\nabla_{g(t)}u,\nabla_{g(t)}u)\right)}dV_{g(t)}.
%    \end{align}
%\end{lemma}

\begin{theorem}\label{theorem3.8}
Suppose that $M$ is a closed $n$-dimensional  Riemannian manifold, $(M,g(t))_{t\in [0,T)}$ is the solution of the Ricci flow $\eqref{RF}$ with bounded Ricci curvature, $-K_{1}g(t)\leq \mathrm{Ric}(g(t)) \leq K_{2}g(t)$, where $K_{1},K_{2}$ are positive constants, and $u(t)$ is a positive solution of the nonlinear equation $\eqref{nonlinear equation}$ with $\eta\leq u(0)\leq A$. Then the following holds:

\begin{itemize} 

\item[(i)] If $h(t)$ is a negative time-dependent function, then the parabolic frequency $U(t)$ is monotone increasing along the Ricci flow.

\item[(ii)] If $h(t)$ is a positive time-dependent function, then the parabolic frequency $U(t)$ is monotone decreasing along the Ricci flow.
\end{itemize}
\end{theorem}
\begin{proof}
Before discussing the monotonicity of $U(t)$, we need to calculate the derivative of $I(t)$ and $D(t)$. From the proof of Lemma \ref{lemma3.2}, we get an estimate of $v(t)$, recall that $u(t)=\ln v(t)$, which implies 
$$\eta e^{at}\leq u(t)\leq A e^{at},$$
then together with Lemma \ref{lemma3.4} yields
\begin{align}
|\nabla u|^{2}\leq \left(\frac{B_{2}}{u^{2}}+B_{3}\right)u^{2}\leq \left(\frac{B_{2}}{\eta^{2}e^{2at}}+B_{3}\right)u^{2}.\label{3.8}
\end{align}
Therefore, by Lemma \ref{lemma3.7} and Young's inequality, we have
    \begin{align}
        I^{\prime}(t)&=\int_{M}{\left(2u\cdot u_{t}-u^{2}\frac{\Delta K}{K}\right)}d\mu\notag\\
            &=\int_{M}{\left(2u\cdot u_{t}-\Delta(u^2)\right)}d\mu\notag\\
            &=\int_{M}\left(2u\cdot u_{t}-2u\Delta u-2|\nabla u|^2\right)d\mu,\notag\\
            &=2\int_{M}{u\left(u_{t}-\frac{\eta e^{at}|\nabla u|^2}{2u}\right)}d\mu-2\int_M{u\Delta u}d\mu-(2-\eta e^{at})\int_M{|\nabla u|^2}d\mu\notag\\
            &\geq2\int_{M}{u\left(u_{t}-\frac{|\nabla u|^2}{2}\right)}d\mu-2\int_M{u\Delta u}\!\ d\mu-\left(2-\eta e^{at}\right)\int_M{|\nabla u|^2}d\mu\notag\\
            &\geq \int_{M}{u\left(2u_{t}-|\nabla u|^2\right)}d\mu-\frac{1}{\epsilon (t)}\int_{M}{u^2}d\mu-\epsilon (t)\int_{M}{|\Delta u|^2}d\mu\notag\\
            &\quad-\left(2-\eta e^{at}\right)\int_{M}{|\nabla u|^2}d\mu\notag\\
            & \geq -\left(2a+\frac{B(n)}{\eta e^{at} t}+\frac{ B_{4}}{\eta e^{at}}+\frac{1}{\epsilon (t)}+\alpha \widetilde{B}(t)\right)\int_{M}{u^2}d\mu-\epsilon (t)\int_{M}{|\Delta u|^2}d\mu\notag,
    \end{align}
    where $\epsilon(t)$ is a constant depend only on time $t$, $\alpha:= \max\{0,2-\eta e^{at}\}$, and $\widetilde{B}(t)=\displaystyle\left(\frac{B_{2}}{\eta^{2}e^{2at}}+B_{3}\right)$. 
    
    For the derivative of $D(t)$, from Lemma \ref{lemma3.1}, we have
    \begin{align}
             D^{\prime}(t)&=\frac{d}{dt}\left(h\int_{M}{|\nabla u|^{2}}d\mu\right)\\
                %&=h'\int_M{|\nabla u|^2}dV+h\int_M\left(\partial_t|\nabla u|^2-|\nabla u|^2\frac{\Delta K}{K}\right)dV\\
            &=h^{\prime}\int_{M}{|\nabla u|^2}d\mu+h\int_{M}{(\partial_t-\Delta)|\nabla u|^2}d\mu\notag\\
                %&=h'\int_M{|\nabla u|^2}dV+h\int_M{(\partial_t-\Delta)|\nabla u|^2}dV\\
            &=h^{\prime}\int_{M}{|\nabla u|^2}d\mu+h\int_{M}\left({2\left<\nabla u,\nabla(\partial_t-\Delta)u\right>-2|\nabla^2 u|^2}\right)d\mu\notag\\
            &=h^{\prime}\int_{M}{|\nabla u|^2}d\mu+h\int_{M}\left({2\left<\nabla u,\nabla\left(au+|\nabla u|^2\right)\right>-2|\nabla^2 u|^2}\right)d\mu\notag\\
             &=(h^{\prime}+2ah)\int_{M}{|\nabla u|^2}d\mu-2h\int_{M}{|\nabla^2 u|^2}d\mu+2h\int_{M}\langle\nabla u,\nabla|\nabla u|^{2}\rangle d\mu\notag,
            % &=(h^{\prime}+2ah)\int_{M}{|\nabla u|^2}d\mu-2h\int_{M}{|\nabla^2 u|^2}d\mu-2h\int_{M}\triangle_{f}u\cdot|\nabla u|^{2}d\mu\notag.
            \end{align}  
        If $h(t)< 0$, applying Lemma \ref{lemma3.4} and the following inequalities
        \begin{align}
        \langle\nabla u,\nabla|\nabla u|^{2}\rangle&=2\nabla_{i}\nabla_{j}u\nabla_{i}u\nabla_{j}u
        \leq c(n)|\nabla u|^{2}|\nabla^{2}u|\notag,\\
        2|\nabla^{2} u|\cdot|\nabla u|^2&\leq \frac{1}{n c(n)} 
            |\nabla^{2} u|^2 +n c(n)|\nabla u|^4,\notag\\
        |\nabla^{2}u|^{2}&\geq\frac{1}{n}|\Delta u|^{2},\notag
        \end{align}
          where $c(n)$ is the positive constant depend only on $n$, then we get
            \begin{align}
                D^{\prime}(t)&\geq (h^{\prime}+2ah)\int_{M}{|\nabla u|^2}d\mu-\frac{1}{n}\cdot h\int_{M}{|\Delta u|^2}d\mu+nhc(n)^{2}\int_{M}|\nabla u|^{4}d\mu\notag\\
                &\geq \bigg[h^{\prime}+h\left(2a+nc^{2}\widetilde{B}(t)A^{2}e^{2at}\right)\bigg]\int_{M}{|\nabla u|^2}d\mu-\frac{1}{n}\cdot h\int_{M}{|\Delta u|^2}d\mu\notag.
            \end{align}
        Together with estimates of $I^{\prime}(t)$ and $D^{\prime}(t)$, from $\eqref{3.8}$, if we let $\displaystyle{\epsilon(t)=\frac{1}{n\widetilde{B}(t)}}$,  then the parabolic frequency $U(t)$ satisfies
        \begin{align}
            I^{2}(t)U^{\prime}(t)&\geq\exp\left\{-\int_{t_0}^{t}{\left(\frac{h'(s)}{h(s)}+4a+\frac{B(n)}{\eta^{2}e^{as}s}+E(s)\right)ds}\right\}\notag\\
            &\quad\cdot(-h)\cdot\left(\frac{1}{n}I(t)\int_{M}|\Delta u|^{2}d\mu-\frac{1}{n \widetilde{B}(t)}\int_{M}|\Delta u|^{2}d\mu\int_{M}|\nabla u|^{2}d\mu\right)\notag\\
            &\geq0.\notag
        \end{align}
        where $\displaystyle E(s)=\left(n+nc^{2}A^{2}e^{2as}+\alpha\right)\widetilde{B}(s)+\frac{B_{4}(s)}{\eta e^{as}}$.

        In the case $h(t)> 0$, we have 
        \begin{align}
                D^{\prime}(t)&\leq (h^{\prime}+2ah)\int_{M}{|\nabla u|^2}d\mu-\frac{1}{n}\cdot h\int_{M}{|\Delta u|^2}d\mu+nh\int_{M}|\nabla u|^{4}d\mu\\
                &\leq \bigg[h^{\prime}+h\left(2a+nc^{2}\widetilde{B}(t)A^{2}e^{2at}\right)\bigg]\int_{M}{|\nabla u|^2}d\mu-\frac{1}{n}\cdot h\int_{M}{|\Delta u|^2}d\mu\notag.
            \end{align}
        Thus, we obtain
        \begin{align}
            I^{2}(t)U^{\prime}(t)&\leq\exp\left\{-\int_{t_0}^{t}{\left(\frac{h'(s)}{h(s)}+4a+\frac{B(n)}{\eta^{2}e^{as}s}+E(s)\right)ds}\right\}\notag\\
            &\quad\cdot(-h)\cdot\left(\frac{1}{n}I(t)\int_{M}|\Delta u|^{2}d\mu-\frac{1}{n \widetilde{B}(t)}\int_{M}|\Delta u|^{2}d\mu\int_{M}|\nabla u|^{2}d\mu\right)\notag\\
            &\leq0.\notag
        \end{align}
  Together with the above, we get the desired results. 
\end{proof}

If we let
$$\varphi(t)=\exp\left\{-\int_{t_0}^{t}{\left(\frac{h'(s)}{h(s)}+4a+\frac{B(n)}{\eta^{2}e^{as}s}+E(s)\right)ds}\right\},$$
then we get the following Corollary:

\begin{corollary}\label{corollary3.9}
Suppose that $M$ is a closed $n$-dimensional  Riemannian manifold, $(M,g(t))_{t\in [0,T)}$ is the solution of the Ricci flow $\eqref{RF}$ with bounded Ricci curvature, $-K_{1}g(t)\leq \mathrm{Ric}(g(t)) \leq K_{2}g(t)$, where $K_{1},K_{2}$ are positive constants, and $u(t)$ is a positive solution of the nonlinear equation $\eqref{nonlinear equation}$ with $\eta\leq u(0)\leq A$. Then for any $t\in[t_{0},t_{1}]\subset(0,T)$,

\begin{itemize} 

\item[(i)] If $0<\eta e^{at}\leq 1$, then
   \begin{equation}
        I(t_1)\geq I(t)\exp\left\{2U(t)\int_{t}^{t_{1}}\frac{\eta e^{as}-1}{\varphi(s)h(s)}ds+2a(t_{1}-t) \right\}.\notag
    \end{equation}

\item[(ii)] If $\eta e^{at}>1$, then
   \begin{equation}
        I(t_1)\geq I(t)\exp\left\{2U(t_{1})\int_{t}^{t_{1}}\frac{\eta e^{as}-1}{\varphi(s)h(s)}ds+2a(t_{1}-t) \right\}.\notag
    \end{equation}
\end{itemize}
\end{corollary}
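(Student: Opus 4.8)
The plan is to convert the monotonicity of $U(t)$ established in Theorem \ref{theorem3.8} into the stated integral inequality for $I(t)$, by first producing a sharp pointwise differential inequality for $\ln I(t)$ and then integrating it against an explicit weight whose sign I control. First I would recompute $I'(t)$ exactly as in the proof of Theorem \ref{theorem3.8}: using $\partial_{t}(d\mu)=-\frac{\Delta K}{K}d\mu$ and integrating by parts on the closed manifold gives $I'(t)=\int_{M}(2uu_{t}-\Delta(u^{2}))\,d\mu=\int_{M}(2uu_{t}-2u\Delta u-2|\nabla u|^{2})\,d\mu$. Substituting the equation $\eqref{nonlinear equation}$ in the form $u_{t}-\Delta u=au+|\nabla u|^{2}$ collapses this to the clean identity
$$I'(t)=2a\,I(t)+2\int_{M}(u-1)|\nabla u|^{2}\,d\mu.$$

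Next I would invoke the maximum-principle lower bound $u(t)\geq \eta e^{at}$, which follows from Lemma \ref{lemma3.2} applied to $v=e^{u}$ (so $v(t)\geq \eta_{1}^{e^{at}}$ and hence $u=\ln v\geq \eta e^{at}$). Since $|\nabla u|^{2}\geq 0$ pointwise and $u-1\geq \eta e^{at}-1$, this yields $\int_{M}(u-1)|\nabla u|^{2}\,d\mu\geq(\eta e^{at}-1)\int_{M}|\nabla u|^{2}\,d\mu$, valid regardless of the sign of $\eta e^{at}-1$. Recalling that $\int_{M}|\nabla u|^{2}\,d\mu=D(t)/h(t)=U(t)I(t)/(\varphi(t)h(t))$ and dividing by $I(t)>0$, I obtain the differential inequality
$$\frac{d}{dt}\ln I(t)\geq 2a+2U(t)\,\frac{\eta e^{at}-1}{\varphi(t)h(t)}.$$

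Finally I would integrate this over $[t,t_{1}]$ and pull $U$ out of the integral using the monotonicity from Theorem \ref{theorem3.8}. The useful simplification is that the pairing of weight and frequency reduces to $U(s)\frac{\eta e^{as}-1}{\varphi(s)h(s)}=(\eta e^{as}-1)\frac{\int_{M}|\nabla u|^{2}\,d\mu}{I(s)}$, so its sign is simply that of $\eta e^{as}-1$. When $\eta e^{at}\leq 1$ the weight $\frac{\eta e^{as}-1}{\varphi(s)h(s)}$ is nonnegative if $h<0$ and nonpositive if $h>0$; in each case it pairs with the matching monotonicity ($U$ increasing, respectively decreasing) to give $U(s)w(s)\geq U(t)w(s)$, and integrating then exponentiating produces case (i). When $\eta e^{at}>1$ all signs reverse and the identical pairing gives $U(s)w(s)\geq U(t_{1})w(s)$, producing case (ii).

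The main obstacle is precisely this coordinated sign analysis in the last step: I must verify that for both signs of $h$ the direction of monotonicity of $U$ is exactly the one compatible with the sign of the weight, which is what allows the conclusion to be stated uniformly in $h$ with a split governed only by $\eta e^{at}$ versus $1$. I would also have to check that $\eta e^{as}-1$ keeps a constant sign across $[t,t_{1}]$, so that the weight does not change sign inside the integral and the one-sided replacement of $U(s)$ by the appropriate endpoint value is legitimate; everything else is the routine exponentiation of the integrated inequality.
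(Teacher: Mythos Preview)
Your proposal is correct and follows essentially the same route as the paper: derive the exact identity $I'(t)=2aI(t)+2\int_M(u-1)|\nabla u|^{2}\,d\mu$, use the lower bound $u\geq\eta e^{at}$ to obtain the differential inequality for $\ln I$, integrate over $[t,t_{1}]$, and invoke Theorem~\ref{theorem3.8} to replace $U(s)$ by the appropriate endpoint value according to the sign of $\eta e^{as}-1$. Your sign analysis is in fact more explicit than the paper's, and the caveat you raise about $\eta e^{as}-1$ keeping a constant sign on $[t,t_{1}]$ is a point the paper's own proof glosses over as well.
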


    \begin{proof}
       We give the proof of case $h(t) < 0$ (The case $h(t) > 0$ is similar to it). According to the definition of $U(t)$ and $\eta e^{at}\leq u(t)\leq Ae^{at}$, yields
        \begin{equation}\label{3.11}
            \frac{d}{dt}\ln I(t)=\frac{I^{\prime}(t)}{I(t)} \geq \frac{2\eta e^{at}-2}{h(t)}\cdot\frac{D(t)}{I(t)}+2a=\frac{2\eta e^{at}-2}{\varphi(t)h(t)}U(t)+2a,
        \end{equation}
   By Theorem \ref{theorem3.8}, integrate $\eqref{3.11}$ from $t$ to $t_{1}$, we get
        \begin{align}
            \ln I(t_1)-\ln I(t)&\geq \int_{t}^{t_{1}}\left(\frac{2\eta e^{as}-2}{\varphi(s)h(s)}U(s)+2a\right) ds\notag.
        \end{align}
        In the case $0<\eta e^{at}\leq1$, which implies $\eta e^{at}-1\leq0$, thus
        \begin{align}
            \ln I(t_1)-\ln I(t)\geq 2U(t)\int_{t}^{t_{1}}\frac{\eta e^{as}-1}{\varphi(s)h(s)}ds+2a(t_{1}-t)\notag
        \end{align}
        From the boundedness of time-dependent function $h(t)$ and $\varphi(t)$, we have
    \begin{equation}
        I(t_1)\geq I(t)\exp\left\{2U(t)\int_{t}^{t_{1}}\frac{\eta e^{as}-1}{\varphi(s)h(s)}ds+2a(t_{1}-t) \right\}.
    \end{equation}
    If $\eta e^{at}>1$, then $\eta e^{at}-1\geq0$. Similarly, we get
        \begin{equation}
        I(t_1)\geq I(t)\exp\left\{2U(t_{1})\int_{t}^{t_{1}}\frac{\eta e^{as}-1}{\varphi(s)h(s)}ds+2a(t_{1}-t) \right\}.
    \end{equation}
    Then we get the desired result.
    \end{proof}

%%%%%%%%%%%%%%%%%%%%%%%%%%%%%%%%%%%%
\section{parabolic frequency of heat-type equation}
%%%%%%%%%%%%%%%%%%%%%%%%%%%%%%%%%%%%%%

In this section, we consider the parabolic $U(t)$ for the positive solution of the heat-type equation $\eqref{heat-type equation}$ under the Ricci flow $\eqref{RF}$.

For a time-dependent function $u=u(t):M\times [t_0,t_1]\rightarrow \mathbb{R}^+$ with $u(t),\partial_{t}u(t)\in W^{2,2}_{0}(d\mu_{g(t)})$ for all $t\in [t_0,t_1] \subset (0,T)$, we define
\begin{align}
I({t}) & =\int_M u(t)^2 d \mu_{g(t)},\\
D(t) & =h(t) \int_M\left|\nabla_{g(t)} u(t)\right|_{g(t)}^2 d \mu_{g(t)}\\
& =-h(t) \int_M\left<u(t), \Delta_{g(t), f(t)} u(t)\right>_{g(t)} d \mu_{g(t)}\notag,
\end{align}
\begin{equation}
    U(t)=\exp\left\{-\int_{t_0}^{t}{\left(\frac{h^{\prime}(s)+2\lambda_{1} pA^{p-1}h(s)}{h(s)}+\frac{N(s)}{2}n+\frac{ pC(n)}{s}+P\right)ds}\right\}\frac{D(t)}{I(t)},
\end{equation}
where
$\lambda_{1}:=\max\{0,-\lambda\}$, $h(t)$ is any time-dependent smooth function,  $\lambda$ is a constant, 
$$
N(t)=C_{2}(t)\left(1+\ln\frac{A}{\eta}\right), \ P=pC_{3}+\lambda_{1}A^{p-1}, \ A=\max_{M\times[t_{0},t_{1}]}u(t),  \eta=\min_{M\times[t_{0},t_{1}]}u(t), 
$$
$C_{2}(t)$ is  from Lemma \ref{lemma4.1} and $C(n),\ C_{3}$ are constants from Lemma \ref{lemma4.2}.

\begin{lemma}\label{lemma4.1}
    Suppose that $M$ is a closed $n$-dimensional  Riemannian manifold, $(M,g(t))_{t\in [0,T)}$ is the solution of the Ricci flow $\eqref{RF}$ with $-K_{1}g(t)\leq {\rm Ric}(g(t))\leq K_{2}g(t)$ for some $K_{1},K_{2}>0$, and $u(t)$ is a positive solution of the heat-type equation $\eqref{heat-type equation}$ with $\eta\leq u(t)\leq A$. Then for all $t\in[t_{0},t_{1}]\subset(0,T)$, we have
    $$
    \frac{|\nabla_{g(t)} u(t)|_{g(t)}}{u(t)}\leq C_{2}(t)\left(1+\ln\frac{A}{u(t)}\right),$$
    where  $$C_{2}(t)=C_{1}\left(\frac{1}{R(t)}+\frac{1}{\sqrt{T}}+\sqrt{\overline{K}}+\sqrt{\alpha}\right),$$
    $C_{1}$ is a positive constant depend only on $n$, $\overline{K}=\max\{K_{1},K_{2}\}$, $0<R(t)<\rho(t)$, $\rho(t)={\rm diam} (M,g(t))$ and $\alpha=\max\{p\lambda A^{p-1},0\}$.
\end{lemma}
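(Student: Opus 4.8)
The estimate is a Hamilton--Souplet--Zhang type gradient bound for the nonlinear equation $\eqref{heat-type equation}$ under Ricci flow, and the plan is to derive it by a logarithmic substitution followed by a maximum principle argument for an auxiliary quotient quantity; alternatively it follows by specializing an existing gradient estimate for nonlinear parabolic equations, in the spirit of the citations used for Lemma \ref{lemma3.2} and Lemma \ref{lemma3.5}. First I would normalize the solution: since $\eta \le u \le A$, set $w = \ln(u/A)$, so that $w \le 0$ and $|\nabla w| = |\nabla u|/u$. A direct computation then shows that $w$ satisfies
$$(\partial_{t}-\Delta)w = |\nabla w|^{2} + \lambda u^{p-1},$$
and the target estimate $|\nabla u|/u \le C_{2}(1 + \ln(A/u))$ becomes the pointwise bound $|\nabla w| \le C_{2}(1-w)$.

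The next step is to control the evolution of $|\nabla w|^{2}$. Applying Lemma \ref{lemma3.1} (the essential point being that under the Ricci flow the two $\text{Ric}(\nabla w,\nabla w)$ terms, one from $\partial_{t}g$ and one from the Bochner formula, cancel exactly) one obtains
$$(\partial_{t}-\Delta)|\nabla w|^{2} = -2|\nabla^{2}w|^{2} + 2\langle \nabla w, \nabla(|\nabla w|^{2}+\lambda u^{p-1})\rangle.$$
The nonlinear contribution comes from $\nabla(\lambda u^{p}) = p\lambda u^{p-1}\nabla u$, equivalently a multiple of $\lambda u^{p-1}|\nabla w|^{2}$ in the identity above; here the factor $p$ enters from differentiating $u^{p}$. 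Since $\eta \le u \le A$ with $p \ge 1$, this term is controlled by $\alpha = \max\{p\lambda A^{p-1},0\}$, which is precisely why $\sqrt{\alpha}$ appears in $C_{2}(t)$.

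Then I would introduce the Souplet--Zhang auxiliary function $W = |\nabla w|^{2}/(1-w)^{2}$ (note $1-w \ge 1 > 0$) and compute its evolution inequality from the identity above via the quotient rule, retaining the favorable term $-2|\nabla^{2}w|^{2}$ and estimating the cross terms $\langle \nabla w, \nabla W\rangle$ by Young's inequality. Since $M$ is closed, I would apply the maximum principle in space-time to $t\,W$, or to $\sigma(t)W$ for a suitable time weight; the time localization produces the $1/\sqrt{t}$ term, the curvature bound $-K_{1}g \le \text{Ric} \le K_{2}g$ enters through the Bochner and Laplacian comparison steps and produces the $\sqrt{\overline{K}}$ term with $\overline{K}=\max\{K_{1},K_{2}\}$, and a spatial cutoff on a geodesic ball of radius $\rho=\text{diam}(M)$ accounts for the $1/\rho$ term. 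Evaluating at an interior space-time maximum and taking square roots gives $W \le C_{1}^{2}(1/\rho + 1/\sqrt{t} + \sqrt{\overline{K}} + \sqrt{\alpha})^{2}$, hence $|\nabla w| \le C_{2}(t)(1-w)$, which is the claim.

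The main obstacle is the maximum principle computation for $W$: one must carefully track the cross terms created by differentiating the quotient and absorb the first-order gradient contributions into $-2|\nabla^{2}w|^{2}$ and into $|\nabla w|^{4}/(1-w)$ via Cauchy--Schwarz and Young's inequality, while simultaneously controlling the sign of the nonlinear term $\lambda u^{p-1}$ so that its contribution is dominated by $\alpha$ rather than by a quantity of indefinite sign. Arranging all of these terms to combine into the stated structure $1/\rho + 1/\sqrt{t} + \sqrt{\overline{K}} + \sqrt{\alpha}$ with a single dimensional constant $C_{1}=C_{1}(n)$ is the delicate part of the argument.
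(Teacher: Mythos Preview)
Your outline is correct in substance: the logarithmic substitution $w=\ln(u/A)$, the Souplet--Zhang quotient $W=|\nabla w|^{2}/(1-w)^{2}$, and the space-time maximum principle with cutoff are exactly the ingredients that produce this estimate, and your identification of where each of the four contributions $1/\rho$, $1/\sqrt{t}$, $\sqrt{\overline{K}}$, $\sqrt{\alpha}$ arises is accurate. One small slip: in the evolution of $|\nabla w|^{2}$ the nonlinear term is $2\langle\nabla w,\nabla(\lambda u^{p-1})\rangle=2\lambda(p-1)u^{p-1}|\nabla w|^{2}$, not a derivative of $\lambda u^{p}$; the missing unit that brings the coefficient up to $p$ comes from the quotient-rule contribution of $(\partial_{t}-\Delta)(1-w)=-\lambda u^{p-1}-|\nabla w|^{2}$ in the denominator.

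The paper, however, does not carry out any of this. Its proof of Lemma~\ref{lemma4.1} is a single sentence: it invokes Theorem~2 of Huang--Ma \cite{Hamilton for heat-type equation}, which already establishes precisely this Hamilton--Souplet--Zhang estimate for the equation $\eqref{heat-type equation}$ under Ricci flow on complete manifolds (with a cutoff on a ball of radius $\rho$), and then specializes to the closed case by taking $\rho=\mathrm{diam}(M)$. So what you have sketched is essentially the \emph{content} of that cited theorem rather than an alternative route; your parenthetical ``alternatively it follows by specializing an existing gradient estimate'' is in fact the paper's entire argument. The advantage of your write-up is that it is self-contained and makes transparent why the constant $\alpha$ depends on $p\lambda A^{p-1}$; the advantage of the paper's approach is brevity, since the cited result matches the needed statement exactly.
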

\begin{proof}
    Since Riemannian manifold $M$ is closed, $M$ can be covered by finite balls for each $t\in [t_{0},t_{1}]$, which means for any $p_{i}\in M$,
    $$\displaystyle\bigcup_{i=1}^{n}B_{p_{i}}(R(t))=(M,g(t)).$$
    Then from Theorem 2 in \cite{Hamilton for heat-type equation}, we prove this Lemma.
\end{proof}

\begin{lemma}\label{lemma4.2}
    Suppose that $M$ is a closed $n$-dimensional  Riemannian manifold, $(M,g(t))_{t\in [0,T)}$ is the solution of the Ricci flow $\eqref{RF}$ with $-K_{1}g(t)\leq {\rm Ric}(g(t))\leq K_{2}g(t)$ for some $K_{1},K_{2}>0$, and $u(t)$ is a positive solution of the heat-type equation $\eqref{heat-type equation}$ with $\eta\leq u(t)\leq A$. Then for all $t\in[t_{0},t_{1}]\subset(0,T)$, we have
    \begin{align}
    \frac{|\nabla_{g(t)} u(t)|^{2}_{g(t)}}{u(t)^{2}}+\frac{\lambda}{p}u(t)^{p-1}-\frac{1}{p}\frac{\partial_{t}u(t)}{u(t)}&\leq\frac{C(n)}{t}+C(n)\left(1+K_{1}+\overline{K}\right)\notag\\
    &\quad+C(n)p^{2}\lambda A^{p-1}\notag\\
    &=\frac{C(n)}{t}+C_{3}\left(\overline{K},\lambda,p,n,A\right),\notag
    \end{align}
    where $\overline{K}=\max\{K_{1},K_{2}\}$ and $C(n)$ is a positive constant depend only on $n$.
\end{lemma}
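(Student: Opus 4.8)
The plan is to prove this Li--Yau type estimate by the parabolic maximum principle applied to a scale-invariant gradient quantity, following Li--Yau but accounting for the metric evolving under $\eqref{RF}$. First I would set $w=\ln u$; dividing $\eqref{heat-type equation}$ by $u$ and using $\Delta w=\Delta u/u-|\nabla w|^{2}$ gives
$$
\partial_{t}w=\Delta w+|\nabla w|^{2}+\lambda u^{p-1}.
$$
A direct substitution then shows that the quantity to be bounded is exactly
$$
\frac{|\nabla u|^{2}}{u^{2}}+\frac{\lambda}{p}u^{p-1}-\frac{1}{p}\frac{\partial_{t}u}{u}=\frac{1}{p}\left((p-1)|\nabla w|^{2}-\Delta w\right),
$$
so it suffices to bound $F:=(p-1)|\nabla w|^{2}-\Delta w=p|\nabla w|^{2}+\lambda u^{p-1}-\partial_{t}w$ from above by $pC(n)/t+pC_{3}$.

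Next I would compute $(\partial_{t}-\Delta)$ of the two building blocks of $F$ under the Ricci flow. Combining $\partial_{t}|\nabla w|^{2}=2\,\mathrm{Ric}(\nabla w,\nabla w)+2\langle\nabla w,\nabla\partial_{t}w\rangle$ with the Bochner formula, the two Ricci contributions cancel, leaving the clean identity
$$
(\partial_{t}-\Delta)|\nabla w|^{2}=2\langle\nabla w,\nabla(\partial_{t}w-\Delta w)\rangle-2|\nabla^{2}w|^{2},
$$
which is the simplification the Ricci flow affords. For the Laplacian term the commutator $\partial_{t}\Delta w=\Delta\partial_{t}w+2\langle\mathrm{Ric},\nabla^{2}w\rangle$ gives
$$
(\partial_{t}-\Delta)\Delta w=\Delta(\partial_{t}w-\Delta w)+2\langle\mathrm{Ric},\nabla^{2}w\rangle.
$$
Writing $Q:=\partial_{t}w-\Delta w=|\nabla w|^{2}+\lambda u^{p-1}$ and using $\nabla u^{p-1}=(p-1)u^{p-1}\nabla w$, I would assemble $(\partial_{t}-\Delta)F$. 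Two features make the argument work: the coefficient of $|\nabla^{2}w|^{2}$ comes out equal to $-2p<0$, and all the genuinely first-order terms regroup as $2\langle\nabla w,\nabla F\rangle$, which vanishes at a spatial maximum.

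With these identities I would apply the maximum principle to $\Phi:=tF$ on $M\times[0,t_{1}]$. At a spacetime maximum of $\Phi$ (if $\Phi\le0$ everywhere then $F\le0$ and the estimate is trivial) one has $\nabla\Phi=0$, $\Delta\Phi\le0$, $\partial_{t}\Phi\ge0$, so $2\langle\nabla w,\nabla F\rangle=0$ there and $0\le(\partial_{t}-\Delta)\Phi=F+t(\partial_{t}-\Delta)F$ reduces to a pointwise algebraic inequality. There I would use $|\nabla^{2}w|^{2}\ge\frac{1}{n}(\Delta w)^{2}$ together with $\Delta w=(p-1)|\nabla w|^{2}-F$ and the elementary bound $(\Delta w)^{2}\ge\frac{1}{2}F^{2}-(p-1)^{2}|\nabla w|^{4}$ to extract the dominant term $-\frac{p}{n}F^{2}$; estimate the Ricci terms $2\,\mathrm{Ric}(\nabla w,\nabla w)$ and $2\langle\mathrm{Ric},\nabla^{2}w\rangle$ by the bounds $-K_{1}g\le\mathrm{Ric}\le K_{2}g$ and Young's inequality; and estimate the nonlinear contributions $\lambda u^{p-1}|\nabla w|^{2}$ and $\lambda u^{p-1}\Delta w$ using $u\le A$. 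Solving the resulting quadratic inequality for $\Phi_{\max}$ yields $F\le pC(n)/t+pC_{3}$ with $C_{3}=C(n)(1+K_{1}+\overline{K})+C(n)p^{2}\lambda A^{p-1}$, which is the claim.

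The step I expect to be the main obstacle is the final bookkeeping: the good term $-\frac{2p}{n}(\Delta w)^{2}$ must be balanced against the indefinite quantities $|\nabla w|^{2}$ and $|\nabla w|^{4}$ hidden inside $\Delta w=(p-1)|\nabla w|^{2}-F$ and against the nonlinear factor $\lambda A^{p-1}|\nabla w|^{2}$, so the Young's inequalities must be arranged with precisely the right powers of $p$ so that the quartic gradient term is absorbed and only a linear-in-$F$ term with a constant coefficient of size $C(n)(1+K_{1}+\overline{K})+C(n)p^{2}\lambda A^{p-1}$ survives; this is exactly where the factor $p^{2}$ and the curvature constant in $C_{3}$ originate. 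Since the companion estimates in this paper are obtained by quoting the literature, an equivalent and shorter route is to specialize a known Li--Yau gradient estimate for $\partial_{t}u=\Delta u+\lambda u^{p}$ under the Ricci flow (in the spirit of the source used for Lemma \ref{lemma4.1}) with the parameter choices producing the stated constants; the computation outlined above is its content.
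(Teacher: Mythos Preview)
Your outline is correct and self-contained: setting $w=\ln u$, computing the evolution of $F=(p-1)|\nabla w|^{2}-\Delta w$ under \eqref{RF}, and applying the maximum principle to $tF$ is the standard Li--Yau mechanism for \eqref{heat-type equation}, and the quadratic bookkeeping you describe does produce the constants in the statement. The paper, however, takes a different and much shorter route: its entire proof is the single sentence that Lemma~\ref{lemma4.2} is Theorem~1.2 of Li--Zhu \cite{Harnack estimates for heat-type equation} specialized with $k=2p$, $\theta=0$, $\gamma=0$, $h\equiv\lambda$. So what you have written is essentially a reconstruction of the argument inside the cited reference, while the paper simply quotes it; you anticipated exactly this in your final paragraph. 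One small imprecision in your sketch: the claim that ``all the genuinely first-order terms regroup as $2\langle\nabla w,\nabla F\rangle$'' is not quite literal, since the nonlinear source $\lambda u^{p-1}$ leaves the residual zero-order contributions $(p-1)^{2}\lambda u^{p-1}|\nabla w|^{2}$ and $-(p-1)\lambda u^{p-1}\Delta w$ after extracting the drift, but you treat these two lines later under ``nonlinear contributions,'' so the argument is unaffected.
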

\begin{proof}
    This Lemma is from Theorem 1.2 in \cite{Harnack estimates for heat-type equation}, here we let $k=2p$, $\theta=0$, $\gamma=0$ and $h\equiv\lambda$.
\end{proof}

\begin{theorem}\label{theorem4.3}
    Suppose that $M$ is a closed $n$-dimensional  Riemannian manifold, $(M,g(t))_{t\in [0,T)}$ is the solution of the Ricci flow $\eqref{RF}$ with $-K_{1}g(t)\leq {\rm Ric}(g(t))\leq K_{2}g(t)$ for some $K_{1},K_{2}>0$, and $u(t)$ is a positive solution of the heat-type equation $\eqref{heat-type equation}$ with $\eta\leq u(t)\leq A$. Then the following holds:

\begin{itemize} 

\item[(i)] If $h(t)$ is a negative time-dependent function, then the parabolic frequency $U(t)$ is monotone increasing along the Ricci flow.

\item[(ii)] If $h(t)$ is a positive time-dependent function, then the parabolic frequency $U(t)$ is monotone decreasing along the Ricci flow.
\end{itemize}
\end{theorem}
\begin{proof}
    Before discussing the monotonicity of $U(t)$, we need to calculate the derivative of $I(t)$ and $D(t)$. Using Young’s identity and Lemma \ref{lemma4.2}, we get the derivative of $I(t)$.
    \begin{align}
        I^{\prime}(t)&=\frac{d}{dt}\left(\int_{M}u^{2}d\mu\right)=\int_{M}\left(2u\partial_{t}u-\Delta u^{2}\right)d\mu\notag\\
        &=2\int_{M}\left[\left(u\cdot\partial_{t}u-p|\nabla u|^{2}\right)+(p-1)|\nabla u|^{2}\right]d\mu-2\int_{M}u\Delta u \!\ d\mu\notag\\
        &\geq-2\left(\lambda_{1} A^{p-1}+\frac{pC(n)}{t}+pC_{3}\right)I(t)-2\int_{M}u\Delta u \!\ d\mu+2(p-1)\int_{M}|\nabla u|^{2}d\mu\notag\\
        &\geq-2\left(\lambda_{1} A^{p-1}+\frac{pC(n)}{t}+pC_{3}+\frac{N(t)}{2}n\right)I(t)-\frac{2}{N(t)n}\int_{M}|\Delta u|^{2}d\mu\notag,
    \end{align}
    where $\lambda_{1}:=\max\{0,-\lambda\}$. For the derivative of $D(t)$, from Lemma \ref{lemma3.1}, we have
    \begin{align}
        D^{\prime}(t)&=h^{\prime}(t)\int_{M}|\nabla u|^{2}d\mu+h(t)\frac{d}{dt}\left(\int_{M}|\nabla u|^{2}d\mu\right)\\
        &=h^{\prime}(t)\int_{M}|\nabla u|^{2}d\mu+h(t)\int_{M}(\partial_{t}-\Delta)|\nabla u|^{2}d\mu\notag\\
        &=h^{\prime}(t)\int_{M}|\nabla u|^{2}d\mu-2h(t)\int_{M}|\nabla^{2}u|^{2}d\mu+2\lambda ph(t)\int_{M}u^{p-1}|\nabla u|^{2}d\mu\notag.
    \end{align}
    If $h(t)< 0$, note that 
    \begin{align}
        D^{\prime}(t)\geq\frac{h^{\prime}(t)+2\lambda_{1} pA^{p-1}h(t)}{h(t)}D(t)-2h(t)\int_{M}|\nabla^{2}u|^{2}d\mu,
    \end{align}
     together with the estimate of $I^{\prime}(t)$ and Lemma \ref{lemma4.1}, yields
    \begin{align}
        I^{2}(t)U^{\prime}(t)&\geq\exp\left\{-\int_{t_0}^{t}{\left(\frac{h^{\prime}(s)+2\lambda_{1} pA^{p-1}h(s)}{h(s)}+\frac{N(s)}{2}n+\frac{ pC(n)}{s}+P\right)ds}\right\}\notag\\
        &\quad\cdot \left[-2h(t)I(t)\int_{M}|\nabla^{2}u|^{2}d\mu+\frac{2h}{N(t)n}\left(\int_{M}|\Delta u|^{2}d\mu\right)\left(\int_{M}|\nabla u|^{2}d\mu\right)\right]\notag\\
        &\geq\exp\left\{-\int_{t_0}^{t}{\left(\frac{h^{\prime}(s)+2\lambda_{1} pA^{p-1}h(s)}{h(s)}+\frac{N(s)}{2}n+\frac{ pC(N)}{s}+P\right)ds}\right\}\notag\\
        &\quad\cdot\left[-\frac{2h}{n}I(t)\left(\int_{M}|\Delta u|^{2}d\mu\right)+\frac{2h}{N(t)n}\cdot N(t)I(t)\left(\int_{M}|\Delta u|^{2}d\mu\right)\right]\notag\\
        &=0\notag.
    \end{align}
    where we let $\displaystyle{N(t)=C_{2}^{2}(t)\left(1+\ln\frac{A}{\eta}\right)}^{2}$.

    On the other hand, if $h(t)>0$, similarly, we have
    \begin{align}
        D^{\prime}(t)\leq\frac{h^{\prime}(t)+2\lambda_{1} pA^{p-1}h(t)}{h(t)}D(t)-2h(t)\int_{M}|\nabla^{2}u|^{2}d\mu,
    \end{align}
    and 
    \begin{align}
        I^{2}(t)U^{\prime}(t)&\leq\exp\left\{-\int_{t_0}^{t}{\left(\frac{h^{\prime}(s)+2\lambda_{1} pA^{p-1}h(s)}{h(s)}+\frac{N(s)}{2}n+\frac{ pC(n)}{s}+P\right)ds}\right\}\notag\\
        &\quad\cdot \left[-2h(t)I(t)\int_{M}|\nabla^{2}u|^{2}d\mu+\frac{2h}{N(t)n}\left(\int_{M}|\Delta u|^{2}d\mu\right)\left(\int_{M}|\nabla u|^{2}d\mu\right)\right]\notag\\
        &=0\notag.
    \end{align}
    Together with the above, we get the desired results.
\end{proof}

If we let 
$$\psi(t)=\exp\left\{-\int_{t_0}^{t}{\left(\frac{h^{\prime}(s)+2\lambda_{1} pA^{p-1}h(s)}{h(s)}+\frac{N(s)}{2}n+\frac{ pC(n)}{s}+P\right)ds}\right\},$$
then we get

\begin{corollary}\label{corollary4.4}
    Suppose that $M$ is a closed $n$-dimensional  Riemannian manifold, $(M,g(t))_{t\in [0,T)}$ is the solution of the Ricci flow $\eqref{RF}$ with $-K_{1}g(t)\leq {\rm Ric}(g(t))\leq K_{2}g(t)$ for some $K_{1},K_{2}>0$, and $u(t)$ is a positive solution of the heat-type equation $\eqref{heat-type equation}$ with $\eta\leq u(t)\leq A$. Then for any $t\in[t_{0},t_{1}]\subset(0,T)$,
    \begin{equation}
        I(t_1)\geq I(t)\exp\left\{2U(t)\int_{t}^{t_{1}}\frac{-1}{\psi(s)h(s)}ds+2\lambda_{1}\eta^{p-1}(t_{1}-t) \right\}.
    \end{equation}
\end{corollary}
\begin{proof}
    We give the proof of case $h(t)<0$ (The case $h(t)>0$ is similar to it). According to the definition of $U(t)$, yields
    \begin{align}
        \frac{d}{dt}\ln I(t)&=\frac{I^{\prime}(t)}{I(t)}=\frac{1}{I(t)}\int_{M}\left[2u\cdot(\partial_{t}-\Delta)u-2|\nabla u|^{2}\right]d\mu\\
        &\geq -\frac{2}{h(t)\psi(t)}U(t)+2\lambda_{1}\eta^{p-1}\notag.
    \end{align}
     Integrate (4.8) from $t$ to $t_{1}$ and by applying Theorem \ref{theorem4.3} to it,  yields
     \begin{align}
         \ln I(t_{1})-\ln I(t)&\geq\int_{t}^{t_{1}}\left(-\frac{2}{h(s)\psi(s)}U(s)+2\lambda_{1}\eta^{p-1}\right)ds\notag\\
         &\geq 2U(t)\int_{t}^{t_{1}}-\frac{ds}{h(s)\psi(s)}+2\lambda_{1}\eta^{p-1}(t_{1}-t)\notag.
     \end{align}
     From the boundedness of time-dependent function $h(t)$, we have
     \begin{align}
         I(t_1)\geq I(t)\exp\left\{2U(t)\int_{t}^{t_{1}}\frac{-1}{\psi(s)h(s)}ds+2\lambda_{1}\eta^{p-1}(t_{1}-t) \right\}\notag.
     \end{align}
     Thus we get the desired result.
\end{proof}

\textbf{Acknowledgments.}\ \ 
The first author and the third author would like to thank Professor Xin-An Ren for useful discussions. The first author is partly supported by the National Key R$\&$D Program of China 2020YFA0712800 and the Interdisciplinary Research Foundation for Doctoral Candidates of Beijing Normal University (Grant BNUXKJC2318). The second author is funded by Shanghai Institute for Mathematics and Interdisciplinary Sciences(SIMIS) under grant number SIMIS-ID-2024-LG.  The authors would also like to thank the referees for their valuable comments and suggestions.

\textbf{ Conflict of Interest}\ \ 
 The authors declare no conflict of interest.

%    Bibliographies can be prepared with BibTeX using amsplain,
%    amsalpha, or (for "historical" overviews) natbib style.
\bibliographystyle{amsplain}
%    Insert the bibliography data here.

\end{document}